\documentclass[11pt]{amsart}

\usepackage{latexsym}
\usepackage{amsfonts}

\def\cal{\mathcal}

\setlength{\textwidth}{150mm}
\setlength{\textheight}{230mm}
\setlength{\oddsidemargin}{.25in}
\setlength{\evensidemargin}{.25in}
\setlength{\topmargin}{-0.2cm}
\setlength{\parskip}{.05in}
\setlength{\hoffset}{-0.4cm}
\setlength{\headheight}{12pt}
\setlength{\headsep}{25pt}

\newcommand{\field}[1]{\mathbb{#1}}
\newcommand{\C}{\field{C}}

\newcommand{\R}{\field{R}}

\newtheorem{defi}{Definition}[section]
\newtheorem{ex}[defi]{Example}
\newtheorem{lem}[defi]{Lemma}
\newtheorem{theo}[defi]{Theorem}
\newtheorem{co}[defi]{Corollary}
\newtheorem{pr}[defi]{Proposition}
\newtheorem{re}[defi]{Remark}

\font\tenmsy=msbm10

\def\Bbb#1{\hbox{\tenmsy#1}} 

\subjclass{14 D 99, 14 R 99, 51 M 99}

\setcounter{section}{0}

\title[On a generic  symmetry defect hypersurface]{On  a generic symmetry defect hypersurface} \makeatletter

\author{S. Janeczko, Z. Jelonek, M.A.S. Ruas}

\address[S. Janeczko ] {Instytut Matematyczny\\
Polska Akademia Nauk\\
\'Sniadeckich 8, 00-956 Warszawa, Poland\\
Wydzia\l  \ Matematki i Nauk Informacyjnych\\
Politechnika Warszawska, Pl. Politechniki 1, 00-661 Warszawa,
Poland } \email{janeczko@impan.pl}
\address[Z. Jelonek]{Instytut Matematyczny\\
Polska Akademia Nauk\\
\'Sniadeckich 8, 00-956 Warszawa, Poland}
\email{najelone@cyf-kr.edu.pl}
\address[M.A.S. Ruas]{Departamento de Matem\'atica,
ICMC-USP, Caixa Postal 668, 13560-970 S\~ao Carlos, S.P., Brasil}
\email{maasruas@icmc.usp.br}

\date{\today}

\begin{document}

\maketitle

\begin{abstract}{Let $f: X\to Y$ be a dominant polynomial mapping
of affine varieties.  For generic $y\in Y$ we have
$Sing(f^{-1}(y))=f^{-1}(y)\cap Sing(X).$ As an application we show
that symmetry defect hypersurfaces for two generic members  of the
irreducible algebraic family of $n-$dimensional smooth irreducible
subvarieties in general position in $\C^{2n}$ are homeomorphic and
they have homeomorphic sets of singular points. In particular
symmetry defect curves for two generic curves in $\C^2$  of the
same degree have the same number of singular points.}
\end{abstract}

\section{Introduction}
Let $X^n \subset \C^{2n}$ be a smooth algebraic variety. In
\cite{jjr} we have investigated the central symmetry  of  $X$ (see
also \cite{GH}, \cite{GJ}, \cite{Jan}).  For $p\in \C^{2n}$ we
have introduced a number $\mu (p) $ of pairs of points $x,y\in X$,
such that $p$ is {the} center of the interval $\overline{xy}$.
Recall that the subvariety $X^n\subset \C^{2n}$ is in a general
position if there exist points $x,y\in X^n$ such that $T_xX\oplus
T_yY=\C^{2n}.$

We have showed in \cite{jjr} that if $X$ is in general position,
then there is a closed algebraic hypersurface $B\subset \C^{2n},$
called {\it symmetry defect hypersurface} of  $X$, such that the
function $\mu$ is  constant (non-zero) exactly outside $B.$ Here
we prove that the symmetry defect hypersurfaces for two generic
members of an irreducible algebraic family  of $n-$dimensional
smooth irreducible subvarieties in general position in $\C^{2n}$
are homeomorphic.

Moreover, we prove a version of Sard theorem for singular
varieties (section 2), which implies  that additionally the
symmetry defect hypersurfaces for two generic members of an
irreducible algebraic family  of $n-$dimensional smooth
irreducible subvarieties in general position in $\C^{2n}$ have
homeomorphic sets of singular points. In particular symmetry
defect curves for two generic curves in $\C^2$  of the same degree
have the same number of singular points.

\section{Generalized Sard's Theorem}
Let $X$ be an irreducible affine variety. Let $Sing(X)$ denote the
set of singular points of $X.$ Let $Y$ be another affine variety
and consider a dominant morphism $f: X\to Y.$ If $X$ is smooth
then by Sard's Theorem a generic fiber of $f$ is smooth. We show
that in a general case the following theorem holds:

\begin{theo}\label{ber}
Let $f: X^k\to Y^l$ be a dominant polynomial mapping of affine
varieties.  For generic $y\in Y$ we have
$Sing(f^{-1}(y))=f^{-1}(y)\cap Sing(X).$
\end{theo}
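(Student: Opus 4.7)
I will establish the equality $Sing(f^{-1}(y))=f^{-1}(y)\cap Sing(X)$ by proving the two inclusions separately for a generic $y\in Y$. Write $U=X\setminus Sing(X)$ for the smooth locus, which is open and dense in $X$. A preliminary dimension count (each irreducible component of $Sing(X)$ has dimension $<k$, so either misses a generic $y$ or meets a generic fiber in dimension $<k-l$) shows that $f|_U:U\to Y$ is dominant and that $f^{-1}(y)\cap U$ is dense in $f^{-1}(y)$ for generic $y$.

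The inclusion $Sing(f^{-1}(y))\subset f^{-1}(y)\cap Sing(X)$ is the easier one and follows from Sard's theorem in its algebraic form (generic smoothness in characteristic zero): applied to $f|_U$, it yields an open dense subset $V_1\subset Y$ such that $f^{-1}(y)\cap U$ is smooth for $y\in V_1$. Any singular point of $f^{-1}(y)$ must therefore lie in $f^{-1}(y)\setminus U=f^{-1}(y)\cap Sing(X)$.

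For the reverse inclusion $f^{-1}(y)\cap Sing(X)\subset Sing(f^{-1}(y))$ my plan is a Zariski tangent-space comparison. After restricting to the smooth locus of $Y$ (a dense open subset), the fiber $f^{-1}(y)$ is locally cut out in $X$ by the $l$ equations $f_i=y_i$, so
$$T_x f^{-1}(y) = T_x X \cap \ker(df_x),$$
which gives $\dim T_x f^{-1}(y)\geq\dim T_x X - l$. By generic flatness of $f$, there exists an open dense $V_2\subset Y$ over which $f$ is flat; for $y\in V_2$ the fiber $f^{-1}(y)$ is then equidimensional of dimension $k-l$, so $\dim_x f^{-1}(y)=k-l$ at every one of its points. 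For $x\in Sing(X)\cap f^{-1}(y)$ with $y\in V_2$, one has $\dim T_x X\geq k+1$, hence
$$\dim T_x f^{-1}(y)\geq k+1-l > k-l = \dim_x f^{-1}(y),$$
and $x$ is singular in $f^{-1}(y)$. Intersecting the finitely many dense open sets produced above yields the required generic $y$.

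The main technical obstacle lies in the reverse inclusion: guaranteeing pointwise equidimensionality of the generic fiber. Without it, a singular point of $X$ could sit on an unexpectedly high-dimensional component of $f^{-1}(y)$ and absorb the extra tangent directions, remaining smooth in the fiber despite the bound above. Generic flatness is precisely the tool that rules this out, playing the role that proper submersions play in the classical Sard setting.
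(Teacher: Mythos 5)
Your argument is essentially the paper's own proof, written dually: the paper bounds the number of generators of the cotangent space $\mathfrak m/\mathfrak m^2$ of $X$ at $z$ by $(k-l)+l=k$ and derives a contradiction with $z\in Sing(X)$, while you bound the Zariski tangent space of the fiber from below by $\dim T_xX-l\ge k+1-l$ and compare with the local fiber dimension $k-l$; these are the same linear algebra. Two remarks. First, you are actually more careful than the paper on one point: you justify that $\dim_x f^{-1}(y)=k-l$ at \emph{every} point of a generic fiber (via generic flatness; upper semicontinuity of fiber dimension would also do), whereas the paper just writes $\dim Z=k-l$. Second, you gloss over a point the paper treats explicitly: the identity $T_xf^{-1}(y)=T_xX\cap\ker(df_x)$ computes the tangent space of the \emph{scheme-theoretic} fiber, i.e.\ of ${\cal O}_x(X)/(f_1-y_1,\dots,f_l-y_l)$. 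The tangent space of the reduced variety $f^{-1}(y)$ is a priori only \emph{contained} in this kernel, so your lower bound does not transfer to it for free --- the inequality goes the wrong way. One must invoke the fact that in characteristic zero the generic fiber is reduced (as the paper does), so that for generic $y$ the two local rings coincide. With that sentence added, your proof is complete and correct.
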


\begin{proof}
We can assume that $Y$ is smooth. Since there exists a mapping
$\pi: Y^l\to \C^l$ which is generically etale, we can assume that
$Y=\C^l.$ Let us recall that if $Z$ is an algebraic variety, then
a point $z\in Z$ is smooth, if and only if the local ring ${\cal
O}_z(Z)$ is regular. This is equivalent to the fact  that $\dim_\C
{\mathfrak m}/{\mathfrak m}^2=\dim Z,$ where $\mathfrak m$ denotes
the maximal ideal of ${\cal O}_z(Z)$.

Let $y=(y_1,...,y_l)\in \C^l$ be a sufficiently generic point.
Then by Sard's Theorem the fiber $Z=f^{-1}(y)$ is smooth outside
$Sing(X)$ and $\dim Z=\dim X - l=k-l.$ Since the generic fiber of
$f$ is reduced (we are in characteristic zero!), then also a
generic particular fiber is reduced. Hence we can  assume that $Z$
is reduced. It is enough to show that every point $z\in Z\cap
Sing(X)$ is singular on $Z.$

Assume that $z\in Z\cap Sing(X)$ is smooth on $Z.$ Let $f: X\to
\C^l$ be given as $f=(f_1,...,f_l)$, where $f_i\in \C[X].$ Then
${\cal O}_z(Z)={\cal O}_z(X)/(f_1-y_1,...,f_l-y_l).$ In particular
if $\mathfrak m'$ denotes the maximal ideal of ${\cal O}_z(Z)$ and
$\mathfrak m$ denotes the maximal ideal of ${\cal O}_z(X)$ then
$\mathfrak m'=\linebreak \mathfrak m/(f_1-y_1,...,f_l-y_l).$ Let
$\alpha_i$ denote a class of the polynomial $f_i-y_i$ in
${\mathfrak m}/{\mathfrak m}^2.$ Let us note that
\begin{equation}\label{eq}
{\mathfrak m'}/{\mathfrak m'}^2={\mathfrak m}/({\mathfrak
m}^2+(\alpha_1,...,\alpha_l)).
\end{equation}
Since the point $z$ is smooth on $Z$ we have $\dim_\C {\mathfrak
m'}/{\mathfrak m'}^2=\dim Z=\dim X-l.$ Take a basis
$\beta_1,...,\beta_{k-l}$ of the space $ {\mathfrak m'}/{\mathfrak
m'}^2$ and let $\overline{\beta_i}\in {\mathfrak m}/{\mathfrak
m}^2$ correspond to $\beta_i$ under the correspondence (\ref{eq}).
Note that vectors $\overline{\beta_1},...,\overline{\beta_{k-l}},
\alpha_1,..., \alpha_l$ generates the space ${\mathfrak
m}/{\mathfrak m}^2.$ This means that $\dim_\C {\mathfrak
m}/{\mathfrak m}^2\le k-l+l=k=\dim X.$ Hence the point $z$ is
smooth on $X$, a contradiction.
\end{proof}

\begin{ex}
{\rm Note  that the general singularities of generic fibers of
$f: X\to Y$ are not necessarily simpler than those of $X$. Indeed,
if $X= Z\times \C^k$ and $f: X\to \C^k$ is a projection, then
the singularities of $X$ and the fiber $f^{-1}(y)$ are of the same
type.}
\end{ex}

\section{Bifurcation set}

Let $k=\C$ or $k=\R$  and let $X, Y$ be affine varieties over $k.$
Recall the following (see \cite{jel}, \cite{jel1}):

\begin{defi}
{\rm Let $f : X \rightarrow Y$ be a  generically-finite (i.e. a
generic fiber is finite) and dominant (i.e. $\overline{f(X)}=Y$)
polynomial mapping of affine varieties. We say that $f$ {\it is
finite at a point} $y \in Y,$ if  there exists an open
neighborhood $U$ of $y$ such that  the mapping $ f\mid_{f^{-1}(U)}
:f^{-1} (U)\rightarrow U$ is proper.}
\end{defi}

If $k=\C$ it is well-known that the set $S_f$ of points at which
the mapping $f$ is not finite, is either empty or it is a
hypersurface (see \cite{jel}, \cite{jel1}). We say that the set
$S_f$ is {\it the set of non-properness} of the mapping $f.$

\begin{defi}
Let $k=\C.$ Let $X,Y$ be  smooth affine $n-$dimensional varieties
and let $f : X\to Y$ be a generically finite dominant mapping of
geometric degree $\mu(f).$ The bifurcation set  of the mapping $f$
is the set
$$B(f)=\{y\in  Y : \# f^{-1}(y)\not=\mu(f)\}.$$
\end{defi}

We have the following fundamental theorem:

\begin{theo}\label{jk}
Let $k=\C.$ Let $X,Y$ be  smooth affine complex varieties of
dimension $n.$ Let $f: X \to Y$ be a polynomial dominant mapping.
Then the set $B(f)$ is either empty (so $f$ is an unramified
topological covering) or it is a closed hypersurface.
\end{theo}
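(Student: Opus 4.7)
The plan is to partition $Y$ into $S_f$ and its complement and analyze $B(f)$ on each piece, using two main inputs: the already-stated structure of the non-properness set $S_f$ from \cite{jel,jel1}, and the Zariski--Nagata purity of the branch locus, applied to the finite restriction of $f$ to $Y \setminus S_f$. If I can show that (a) $B(f) \cap (Y \setminus S_f)$ is empty or pure of codimension one, and (b) $S_f \subseteq B(f)$, then since $S_f$ is already known to be empty or a hypersurface, $B(f)$ will be the union of two such sets, hence empty or a closed hypersurface itself.

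On $U := Y \setminus S_f$, the restriction $g := f|_{f^{-1}(U)} : f^{-1}(U) \to U$ is a finite morphism between smooth varieties of the same dimension, of degree $\mu(f)$. For every $y \in U$ one has $\sum_{x \in g^{-1}(y)} \mathrm{mult}_x(g) = \mu(f)$, so $\# f^{-1}(y) \le \mu(f)$ with equality precisely when $g$ is \'etale at each preimage of $y$. Thus $B(f) \cap U$ coincides with the branch locus of $g$, and Zariski--Nagata purity (using that both source and target of $g$ are smooth) gives that $B(f) \cap U$ is either empty or pure of codimension one in $U$. In the empty case, $g$ is finite \'etale, a topological covering over $U$.

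For the inclusion $S_f \subseteq B(f)$, I invoke the geometric description of $S_f$ from \cite{jel,jel1}: for each $y \in S_f$ there is an analytic arc $\gamma(t) \in X$ with $\gamma(t) \to \infty$ in $X$ and $f(\gamma(t)) \to y$. Picking a sequence $y_n \to y$ off $B(f)$ and tracking the $\mu(f)$ preimages over each $y_n$, the arc $\gamma$ forces at least one branch to escape to infinity as $n \to \infty$; hence the set of limits in $X$ has fewer than $\mu(f)$ points (or $f^{-1}(y)$ is positive-dimensional and so infinite). Either way $y \in B(f)$. Combining the two pieces, $B(f) = S_f \cup (B(f) \cap U)$; closures of the second piece contribute only material inside $S_f \subseteq B(f)$, so $B(f)$ is closed in $Y$ and, when nonempty, of pure codimension one. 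If $B(f) = \emptyset$, then $S_f = \emptyset$ (so $f$ is finite) and $g$ is unramified everywhere, hence $f$ is a finite \'etale, i.e., unramified topological, covering.

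The main obstacle will be the inclusion $S_f \subseteq B(f)$: it is geometrically true but genuinely requires the escape-to-infinity property of the non-properness set, together with some care to handle the case where $f^{-1}(y)$ jumps up in dimension. Once that input is available from \cite{jel,jel1}, everything else is a clean application of Zariski--Nagata purity together with the elementary multiplicity bookkeeping above.
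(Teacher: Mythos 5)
Your proposal is correct in substance and follows the same basic decomposition as the paper ($Y=S_f\cup(Y\setminus S_f)$, with $S_f\subseteq B(f)$ and the complement handled by ramification theory), but both of your key technical inputs differ from the paper's. Over $U=Y\setminus S_f$ you invoke Zariski--Nagata purity of the branch locus of the finite map $g$; the paper instead works upstairs, noting that the critical locus $J(f)$ is a hypersurface in $X$ (locally a Jacobian zero set) and that any component $J_i$ whose image $W_i=\overline{f(J_i)}$ drops below codimension one has non-compact generic fibers over $W_i$, forcing $W_i\subseteq S_f$ --- so the possibly ``bad'' components of $f(J(f))$ are absorbed into $S_f$ and $B(f)=\bigcup W_i\cup S_f$ is a hypersurface. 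The two arguments buy the same conclusion; purity is the more standard citation, while the paper's version is self-contained and also explains globally where the low-dimensional images go. For $S_f\subseteq B(f)$, you use the analytic escape-to-infinity characterization of $S_f$ and a limiting argument on fibers; the paper instead splits into the infinite-fiber case and the finite-fiber case, and in the latter applies Grothendieck's form of Zariski's Main Theorem to embed $f^{-1}(U)$ in a normal variety $\overline V$ finite over $U$, then uses a monic-polynomial lemma (Lemma 3.4 of the paper) to get $\#f^{-1}(y)\le\mu(f)$ and strict inequality when $y$ meets $\overline f(\overline V\setminus V)$. Your route is more topological and is the one place where your sketch is genuinely thin: to conclude that \emph{every} point of a finite fiber $f^{-1}(y)$ is a limit of preimages of $y_n$ (so that the escaping branch really costs you one point out of $\mu(f)$) you need openness of $f$ at isolated points of fibers (local structure of finite holomorphic maps between equidimensional smooth spaces); once that is supplied the argument closes, and the ZMT route can be seen as the algebraic packaging of exactly this step.
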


\begin{proof}
Let us note that outside the set $S_f$ the mapping $f$ is a
(ramified) analytic cover of degree $\mu(f).$ By the Lemma \ref{l}
below if $y\not\in S_f$ we have $\# f^{-1}(y)\le \mu(f).$
Moreover, since $f$ is an analytic covering outside $S_f$ it is
well known that the fiber $f^{-1}(y)$ counted with multiplicity
has exactly $\mu(f)$ points. In particular, if $y\in K_0(f)$, {the
set of critical values of $f,$} then $\# f^{-1}(y)<\mu(f).$

Now let $y\in S_f.$ There are {two} possible  cases :

a)  $\# f^{-1}(y)=\infty.$

b)  $\# f^{-1}(y)<\infty.$

In  case b) let $U$ be an affine neighborhood of $y$ over which
the mapping $f$ is quasi-finite. Let $V=f^{-1}(U).$ By  Zariski
Main Theorem in the version given by Grothendieck, there exists a
normal variety $\overline{V}$ and a finite mapping
$\overline{f}:\overline{ V}\to U$, such that

1) $V\subset  \overline{V}$,

2) $\overline{f}\mid_V=f.$

Since  $y\in \overline{f}(\overline{V}\setminus V)$, it follows by
the Lemma \ref{l} below, that $\# f^{-1}(y)<\mu(f).$ Consequently,
if $y\in S_f,$ we have $\# f^{-1}(y)<\mu(f).$ Finally we have
$B(f)= K_0(f)\cup S_f.$

Now we show that the set $B(f)=K_0(f)\cup S_f$ is a hypersurface.
Let $J(f)$ be  the set of singular points of $f.$  The set $J(f)$
is a hypersurface (because locally it is the zero set of the
Jacobian of $f$). Denote by  $J_i$  {the} irreducible components
of {$J(f).$} Let $W_i=\overline{f(J_i)}.$ If all $W_i$ are
hypersurfaces then the theorem is true. If, for example dim $W_1<
n_1$, then the mapping $f : J_1\to W_1$ has non-compact generic
fiber, this means in particular that $W_1\subset S_f.$ Thus the
set $\bigcup W_i\cup S_f$ is a hypersurface. But $B(f)=\bigcup
W_i\cup S_f$ (note that $B(f)$ is closed).

Moreover, if $B(f)=\emptyset$,  then $f$ is a surjective
topological covering.
\end{proof}

\begin{lem}\label{l}
Let $X,Y$ be affine normal varieties of dimension $n.$ Let $f:X\to
Y$ be a finite mapping. Then for every $y\in Y$ we have $\#
f^{-1}(y)\le\mu(f).$
\end{lem}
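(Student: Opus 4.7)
The plan is to deduce $\#f^{-1}(y)\le\mu(f)$ from the integrality of $\mathcal{O}(X)$ over $\mathcal{O}(Y)$ together with the normality of $Y$, by reducing an appropriate minimal polynomial modulo the maximal ideal at $y$.

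First, fix $y\in Y$ and enumerate the fiber $f^{-1}(y)=\{x_1,\dots,x_r\}$. Because $X$ is affine, its coordinate functions separate points, so a sufficiently generic linear combination yields a regular function $g\in\mathcal{O}(X)$ whose values $g(x_1),\dots,g(x_r)$ are pairwise distinct elements of $k(y)$.

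Next, since $f$ is finite, $\mathcal{O}(X)$ is integral over $\mathcal{O}(Y)$; in particular $g$ is integral over $\mathcal{O}(Y)$. Let $P(T)\in K(Y)[T]$ be the minimal polynomial of $g$ over the function field $K(Y)$. Then $\deg P\le [K(X):K(Y)]=\mu(f)$, and the coefficients of $P$, being (up to sign) the elementary symmetric functions of the Galois conjugates of $g$ in an algebraic closure of $K(Y)$, are integral over $\mathcal{O}(Y)$. This is where the hypothesis that $Y$ is normal enters: it forces these coefficients to lie in $\mathcal{O}(Y)$ itself, so that $P(T)\in\mathcal{O}(Y)[T]$.

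Finally, since $P(g)=0$ in $\mathcal{O}(X)$, evaluation at each $x_i$ yields $\overline{P}(g(x_i))=0$ in $k(y)$, where $\overline{P}\in k(y)[T]$ denotes the reduction of $P$ modulo the maximal ideal of $y$. The elements $g(x_1),\dots,g(x_r)$ are therefore $r$ distinct roots of the polynomial $\overline{P}$ of degree at most $\mu(f)$, whence $r\le\mu(f)$. The only delicate point is the use of normality of $Y$ to guarantee $P\in\mathcal{O}(Y)[T]$; the rest is a direct specialization argument.
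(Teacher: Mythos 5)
Your proof is correct and follows essentially the same route as the paper's: choose a regular function separating the fiber, use finiteness of $f$ together with normality of $Y$ to produce a monic annihilating polynomial of degree at most $\mu(f)$ with coefficients in $\C[Y]$, and specialize at $y$. The paper states this much more tersely, but your write-up is just the same argument with the role of normality made explicit.
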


\begin{proof}
Let $\# f^{-1}(y)=\{ x_1,..., x_r\}.$ We can choose a function
$h\in \C[X]$ which separates all $x_i$ (in particular we can take
as $h$ the equation of a general hyperplane section). Since $f$ is
finite we have a monic polynomial
$T^s+a_1(f)T^{{s-1}}+...+a_s(f)\in f^*\C[Y][T]$,  $ s\le \mu(f).$
If we substitute $f=y$ to this equation we get  {the} desired
result.
\end{proof}

\section{A super general position}

In this section we describe some properties of a variety
$X^n\subset \C^{2n}$ which implies that $X$ is in a general
position. Recall that the subvariety $X^n\subset \C^{2n}$ is in a
general position if there  exist  points $x,y\in X^n$ such that
$T_xX\oplus T_yY=\C^{2n}.$

\begin{defi}
Let $X^n\subset\C^{2n}$ be a smooth algebraic variety. We say that
$X$ is in very general position if there exists a point $x\in X$
such that the set $T_xX\cap X$ has an isolated point (here we
consider $T_x X$ as a linear subspace of $\C^{2n}$).
\end{defi}

We consider also a slightly stronger property:

\begin{defi}
Let $X^n\subset\C^{2n}$ be a smooth algebraic variety and let
$S=\overline{X}\setminus X\subset \pi_\infty$ be the set of points
at infinity of $X^n.$ We say that $X$ is in super general position
if there exists a point $x\in X$ such that  $T_xX\cap S=\emptyset$
(here we consider $T_x X$ as a linear subspace of $\Bbb
P^{2n}=\C^{2n}\cup \pi_\infty$).
\end{defi}

We have the following:

\begin{pr}
If $X$ is in a  super general position, then it is in a very
general position.
\end{pr}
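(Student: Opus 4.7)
The plan is to pass to the projective closure and use a compactness argument. Work in $\Bbb P^{2n} = \C^{2n} \cup \pi_\infty$. Let $x\in X$ be a point witnessing super general position, so that $\overline{T_xX}\cap S = \emptyset$, where $\overline{T_xX}$ denotes the projective closure of the affine tangent space $x+T_xX$; this is an $n$-dimensional projective linear subspace of $\Bbb P^{2n}$.

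The key object is the intersection
$$I \;=\; \overline{T_xX}\cap \overline{X} \;\subset\; \Bbb P^{2n},$$
which is a closed algebraic (hence compact) subset of $\Bbb P^{2n}$. First I would decompose $\overline{X}=X\sqcup S$ as a set to obtain
$$I \;=\; \bigl(\overline{T_xX}\cap X\bigr)\;\sqcup\; \bigl(\overline{T_xX}\cap S\bigr) \;=\; T_xX\cap X,$$
the second piece being empty by the super general position assumption. Thus $I$ lives entirely in the affine chart $\C^{2n}$, and so the (affine) intersection $T_xX\cap X$ agrees with the (projective) intersection $I$.

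Next I would invoke the principle that a projective algebraic variety contained in an affine chart must be finite: every positive-dimensional irreducible closed subvariety of $\Bbb P^{2n}$ meets every hyperplane (projective dimension theorem), so an irreducible component of $I$ that avoids $\pi_\infty$ must have dimension $0$. Applying this componentwise to $I$ shows that $T_xX\cap X$ is a finite set of points. In particular $x\in T_xX\cap X$ is an isolated point, which is exactly the very general position condition.

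There is no serious obstacle here; the proof is essentially an application of the fact that \emph{compact $+$ affine $\Rightarrow$ finite}. The only delicate point is to keep straight the identification between the affine tangent space $T_xX\subset \C^{2n}$ and its projective closure $\overline{T_xX}\subset \Bbb P^{2n}$, so that the disjoint decomposition of $I$ through $\overline{X}=X\sqcup S$ is correctly interpreted.
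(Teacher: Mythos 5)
Your proof is correct and follows essentially the same route as the paper's: the paper also observes that $R=T_xX\cap X$ must be finite, since otherwise its point at infinity would lie in $T_xX\cap S=\emptyset$. You have simply spelled out in more detail the compactness/projective dimension theorem step that the paper leaves implicit.
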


\begin{proof}
Let $x\in X$ be a point such that $T_xX\cap S=\emptyset.$ Take
$R=T_xX\cap X.$ Then the set $R$ is finite, since otherwise the
point at infinity of $R$ belongs to $T_xX\cap S=\emptyset.$
\end{proof}

We have also:

\begin{pr}
Let $X\subset \C^{2n}$ be in a super general position. Then for a
generic point $x\in X$ we have $T_xX\cap S=\emptyset.$
\end{pr}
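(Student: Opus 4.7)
The plan is to show that the locus
\[
U = \{\, x \in X : T_xX \cap S = \emptyset \,\}
\]
is a nonempty Zariski-open subset of $X$; since $X$ is irreducible, this will imply that $U$ is dense and hence contains a generic point. Nonemptiness of $U$ is immediate from the hypothesis that $X$ is in super general position, so the whole content of the proof is openness.

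To prove openness I would pass to the projective closure $\overline{X} \subset \Bbb P^{2n}$ and consider the incidence variety
\[
I = \{\,(x,p) \in X \times S : p \in T_xX\,\} \subset X \times S,
\]
where $T_xX$ is regarded as a projective linear subspace of $\Bbb P^{2n}$. The condition $p \in T_xX$ is algebraic in $(x,p)$ (the projective tangent space varies algebraically with $x$), so $I$ is Zariski-closed in $X \times S$. The complement of $U$ in $X$ is exactly the image of $I$ under the first projection $\pi_X : X \times S \to X$.

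The key observation is that $S = \overline{X} \setminus X$ is a closed subvariety of the hyperplane at infinity $\pi_\infty \cong \Bbb P^{2n-1}$, hence $S$ is projective. Therefore the projection $\pi_X : X \times S \to X$ is a proper morphism, and in particular it maps closed subsets to closed subsets. Applying this to $I$, we conclude that $\pi_X(I) = \{x \in X : T_xX \cap S \neq \emptyset\}$ is closed in $X$, so $U$ is open.

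The main (and really only) point requiring care is the properness step, i.e.\ the fact that the projection from $X \times S$ along a projective fiber $S$ is closed; everything else is formal, since closedness of $I$ follows from the algebraic dependence of $T_xX$ on $x$, and nonemptiness of $U$ is exactly the hypothesis of super general position. Combining the three facts (closedness of $I$, properness of $\pi_X$, nonemptiness of $U$) with the irreducibility of $X$ yields that $U$ is a nonempty open, hence generic, subset of $X$, which is what we wanted to prove.
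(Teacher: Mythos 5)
Your proof is correct and follows essentially the same route as the paper: both form the incidence variety $\{(x,s)\in X\times S : s\in T_xX\}$, use properness of the projection to $X$ (which you justify, rightly, by the completeness of $S$) to conclude its image is closed, and then note that super general position provides a point outside this image, so its complement is a nonempty Zariski-open, hence dense, subset of the irreducible variety $X$.
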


\begin{proof}
It is easy to see that the set $\Gamma=\{ (s,x)\in S\times X :
s\in T_xX \}$ is an algebraic subset of $S\times X.$ Let $\pi:
\Gamma\ni (s,x)\to x\in  X$ be a projection. It is a proper
mapping. Since the variety $X$ is in a very general position, we
see that at least one point $x_0\in X$ is not in the image of
$\pi.$ Thus almost every point of $X$ is not in the image of
$\pi$, because the image of $\pi$ is a closed subset of $X.$
\end{proof}

Finally we have:

\begin{theo}
If $X\subset\C^{2n}$ is in a very general position, then it is in
a general position, i. e., there exist points $x,y\in X$ such that
$T_xX\oplus T_yX=\C^{2n}.$ In fact for every generic pair
$(x,y)\in X\times X$ we have $T_xX\oplus T_yX=\C^{2n}.$
\end{theo}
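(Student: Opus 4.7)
The plan is to leverage the very general position witness at a single point to force dominance of a suitable linear projection, then extract a dense family of transverse pairs via generic smoothness applied to that projection.

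Let $x_0\in X$ witness very general position and let $y_0$ be an isolated point of $L\cap X$, where $L:=T_{x_0}X$ is viewed as an $n$-dimensional linear subspace of $\C^{2n}$. Consider the linear projection $\pi:\C^{2n}\to\C^{2n}/L\cong\C^n$ and set $f:=\pi|_X:X\to\C^n$. Since $L=\ker\pi$ and $y_0\in L$, the fiber $f^{-1}(0)$ equals $L\cap X$ and has $\{y_0\}$ as a zero-dimensional irreducible component. If $f$ were not dominant, then $\dim\overline{f(X)}<n$, and by the theorem on dimension of fibers every irreducible component of every non-empty fiber of $f$ would have dimension at least $\dim X-\dim\overline{f(X)}\geq 1$, contradicting the zero-dimensional component $\{y_0\}$ of $f^{-1}(0)$. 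Hence $f$ is dominant.

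By generic smoothness in characteristic zero, there is a Zariski-dense open $U\subset X$ on which $f$ is a submersion. For $y\in U$, the differential $df_y=\pi|_{T_yX}$ is surjective, i.e.\ $T_yX+L=\C^{2n}$; a dimension count then forces the sum to be direct, $T_{x_0}X\oplus T_yX=\C^{2n}$. This already yields a pair witnessing general position. For the genericity statement, the set
$$W=\{(x,y)\in X\times X : T_xX\oplus T_yX\neq\C^{2n}\}$$
is Zariski closed in $X\times X$, being locally cut out by the vanishing of $2n\times 2n$ minors of matrices formed from algebraic frames of the tangent bundle of $X$. The previous step shows that $(\{x_0\}\times U)\cap W=\emptyset$, so $W$ is a proper closed subset of the irreducible variety $X\times X$, and its complement is Zariski open and dense.

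The subtlety to avoid is the tempting but false shortcut of inferring $T_{x_0}X\oplus T_{y_0}X=\C^{2n}$ directly from the fact that $y_0$ is an isolated point of $T_{x_0}X\cap X$: two smooth complex submanifolds of complementary dimension can meet at a single set-theoretic point without being transverse there (e.g.\ $\{y=0\}$ and $\{y=x^2\}$ in $\C^2$ share only the origin yet have the same tangent line there). The detour through dominance of $\pi|_X$ followed by generic smoothness is precisely what lets us harvest honest transverse pairs at points possibly very far from the witness $y_0$.
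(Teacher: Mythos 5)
Your proof is correct and follows essentially the same route as the paper: the projection $\pi|_X$ along $L=T_{x_0}X$ is exactly the paper's map $F=(l_1,\dots,l_n)$, dominance is forced by the isolated point of the fiber over $0$, and Sard/generic smoothness yields a dense set of transverse partners for $x_0$. The only cosmetic difference is in the last step, where the paper phrases the openness of the transversality condition on $X\times X$ via regular points of the addition map $\Phi(x,y)=x+y$ rather than via vanishing of minors, but the content is the same.
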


\begin{proof}
Let $x_0\in X$ be the point such that the set $T_{x_0}X\cap X$ has
an isolated point. The space $T_{x_0}X$ is given by $n$ linear
equations $l_i=0.$ Let $F: X\ni x \to (l_1(x),...,l_n(x))\in
\C^n.$ By the assumption the fiber over $0$ of $F$ has an isolated
point, in particular the mapping $F$ is dominant. Now by the Sard
Theorem almost every point $x\in X$ is a regular point of $F.$
This means that $T_xX$ is complementary to $T_{x_0}X$, i.e.,
$T_{x_0}X\oplus T_xX=\C^{2n}.$ If we consider the mapping $\Phi:
X\times X\ni (x,y)\to x+y\in \C^{2n}$, we see that it has the
smooth point $(x_0,x).$ In particular almost every pair $(x,y)$ is
a smooth point of $F,$ which implies that for every generic pair
$(x,y)\in X\times X$ we have $T_xX\oplus T_yX=\C^{2n}.$
\end{proof}

We shall use in the sequel the following:

\begin{pr}\label{general}
Let $X^n\subset \C^{2n}$ be a generic smooth complete intersection
of multi-degree $d_1,..., d_n.$ If every $d_i>1$, then $X$ is in a
super general position.
\end{pr}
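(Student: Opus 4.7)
The plan is to run a universal incidence count over the parameter space of complete intersections and show that the locus where super general position fails has codimension at least one. Let $P$ denote the affine space of tuples $f = (f_1,\dots,f_n)$ with $\deg f_i \le d_i$, and let $P^\circ \subset P$ be the dense open subset where $X(f) := V(f_1,\dots,f_n)$ is a smooth irreducible $n$-fold whose projective closure in $\mathbb{P}^{2n}$ is a smooth complete intersection; for such $f$ the set at infinity is $S(f) = V(f_1^{(d_1)},\dots,f_n^{(d_n)}) \subset \pi_\infty \cong \mathbb{P}^{2n-1}$, where $f_i^{(d_i)}$ denotes the top homogeneous part of $f_i$.

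I would then introduce the closed incidence variety
\[
W = \{(f,x,[v]) \in P \times \C^{2n} \times \pi_\infty : f_i(x)=0,\ \nabla f_i(x)\cdot v=0,\ f_i^{(d_i)}(v)=0,\ i=1,\dots,n\},
\]
whose three blocks of $n$ equations encode, respectively, $x \in X(f)$, $v \in T_xX(f)$, and $[v] \in S(f)$. To bound $\dim W$, project onto $\pi_\infty \times \C^{2n}$ and fix $([v],x)$ generic: the three conditions in the index $i$ become the linear functionals $f_i \mapsto f_i(x)$, $f_i \mapsto \nabla f_i(x)\cdot v$, $f_i \mapsto f_i^{(d_i)}(v)$ on the coefficient space of polynomials of degree $\le d_i$. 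Here is the one place the hypothesis enters: when $d_i \ge 2$ these three functionals are linearly independent for generic $(x,v)$ (test them against the monomials $1$, $x_1$, $x_1^{d_i}$ to get an upper-triangular matrix with nonzero diagonal), whereas if $d_i = 1$ the last two functionals coincide identically. Granting independence, the fiber of $W \to \pi_\infty \times \C^{2n}$ over a generic point has codimension $3n$ in $P$, so
\[
\dim W \le (2n-1) + 2n + (\dim P - 3n) = \dim P + n - 1.
\]

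To finish, project $W$ to $P \times \C^{2n}$; since $\pi_\infty$ is projective this map is closed, hence the image $W'$ is a closed subvariety with $\dim W' \le \dim P + n - 1$. The fiber of $W' \to P$ over $f \in P^\circ$ is precisely $\{x \in X(f) : \mathbb{P}(T_xX(f)) \cap S(f) \neq \emptyset\}$. By the theorem on dimensions of fibers, for $f$ in a dense open subset of $P$ this fiber is either empty or of dimension at most $n-1$; since $X(f)$ is irreducible of dimension $n$, it is a proper subset of $X(f)$, and any point in the complement witnesses $T_xX(f) \cap S(f) = \emptyset$, i.e.\ puts $X(f)$ in super general position. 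The main obstacle in the plan is the linear-independence step, as it is the only moment where the arithmetic hypothesis $d_i > 1$ is used; everything else is a standard incidence-variety dimension count combined with Chevalley's theorem on fiber dimensions.
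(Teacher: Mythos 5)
Your argument is correct, but it takes a genuinely different route from the paper's. The paper argues pointwise: after translating a chosen point of $X$ to the origin, the tangent space there is cut out by the linear parts $f_{i1}$ while $S$ is cut out by the top parts $f_{id_i}$; because $d_i>1$ these two blocks of coefficients are disjoint, so they vary independently, and a generic $(n-1)$-plane at infinity misses a generic codimension-$n$ subvariety of $\pi_\infty\cong\Bbb P^{2n-1}$. You instead run the full incidence-correspondence machine over the whole parameter space $P$, and your linear-independence computation (testing $f\mapsto f(x)$, $f\mapsto\nabla f(x)\cdot v$, $f\mapsto f^{(d_i)}(v)$ against $1$, $x_j$, $x_j^{d_i}$) is exactly the global version of the paper's ``independent coefficients'' observation --- both proofs use $d_i>1$ in the same place, to decouple the first-order data from the top-degree data. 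What your version buys is more: it shows the bad locus $\{x\in X: \overline{T_xX}\cap S\neq\emptyset\}$ has dimension at most $n-1$, i.e.\ that a \emph{generic} point of $X$ witnesses super general position, which the paper has to derive separately (its second Proposition of that section, via properness of $\Gamma\to X$). The paper's version is shorter and avoids the bookkeeping. One small point to tighten: to bound $\dim W$ you need the fibers of $W\to\pi_\infty\times\C^{2n}$ to have codimension $3n$ over \emph{every} point, not just a generic one, since a component of $W$ could sit over a special locus; this is harmless here because for any $v\neq 0$ you can pick $j$ with $v_j\neq 0$ and run your triangular-matrix test with $1$, $x_j$, $x_j^{d_i}$, so the three functionals are independent everywhere on $\C^{2n}\times\pi_\infty$.
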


\begin{proof}
We can assume that $X$ is given by $n$ smooth hypersurfaces $f_i=
a_i+ f_{i1}+...+f_{id_i}$ (where $f_{ik}$ is a homogenous
polynomial of degree $k$), which have independent all coefficients
(see section below). The tangent space is described by polynomials
$f_{i1}, i=1,...,n$ and the set $S$ of points at infinity of $X$
is described by polynomials $f_{id_i}, i=1,...,n.$ Since these two
families of polynomials have independent coefficients, we see that
generically the zero sets at infinity of these two families are
disjoint. In particular such a generic $X$ is in a super general
position.
\end{proof}

\section{Algebraic families}

Now we introduce the notion of an algebraic family.

\begin{defi}
Let $M$ be a smooth affine algebraic variety and let $Z$ be  a smooth irreducible subvariety of
$M\times \C^n$ . If the restriction to $Z$ of the  projection
$\pi: M\times \C^n\to M$ is a dominant map with generically irreducible
fibers of the same  dimension, then we call  the collection $\Sigma=\{
Z_m=\pi^{-1}(m)\}_{m\in M}$  an algebraic family of
subvarieties in $\C^n.$ We say that this family is in a general
position if a generic member of $\Sigma$ is in a general position
in $\C^n.$
\end{defi}

We show that the ideals $I(Z_m)\subset \C[x_1,..., x_n]$ of a generic
member of $\Sigma$ depend in a parametric way on $m\in M.$

\begin{lem}\label{ideal}
Let $\Sigma$ be an algebraic family given by a smooth variety
$Z\subset M\times \C^n.$  The ideal $I(Z)\subset
\C[M][x_1,....,x_{n}]$ is finitely generated,  let the polynomials
$\{f_1(m,x),..., f_s(m,x)\}$ form its set of generators. The ideal
$I(Z_m)\subset \C[x_1,..., x_{n}]$ of a generic member
$Z_m:=\pi^{-1}(m)\subset \C^{n}$ of $\Sigma$ is equal to
$I(Z_m)=(f_1(m,x),..., f_s(m,x)).$
\end{lem}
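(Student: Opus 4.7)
The plan is to prove the two inclusions separately. The containment $(f_1(m,x),\ldots,f_s(m,x)) \subseteq I(Z_m)$ is immediate: each $f_i$ lies in $I(Z)$, so $f_i(m,x)$ vanishes on the fiber $Z_m = \pi^{-1}(m)$. All the work is in the reverse inclusion.

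First I would interpret $J_m := (f_1(m,x),\ldots,f_s(m,x))$ schematically. Writing $A = \C[M]$ with maximal ideal $\mathfrak m_m$ at $m$, and $C = A[x_1,\ldots,x_n]/I(Z)$ for the coordinate ring of $Z$, a direct tensor-product computation (right-exactness of $-\otimes_A A/\mathfrak m_m$) yields
\begin{equation*}
C/\mathfrak m_m C \;\cong\; \C[x_1,\ldots,x_n]/J_m ,
\end{equation*}
so $\mathrm{Spec}(\C[x]/J_m)$ is precisely the scheme-theoretic fiber of $\pi|_Z$ over $m$. The reduced variety underlying this scheme is $Z_m$, hence $I(Z_m) = \sqrt{J_m}$. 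The problem therefore reduces to showing that the scheme-theoretic fiber is already reduced for generic $m$, that is, that $J_m$ is itself a radical ideal.

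For this I would invoke generic smoothness in characteristic zero. Since $Z$ is smooth irreducible, $M$ is smooth, and $\pi|_Z : Z \to M$ is dominant, there exists a dense open $U \subseteq M$ such that $\pi|_Z : \pi^{-1}(U) \to U$ is a smooth morphism; this is the same char-zero mechanism already used in the proof of Theorem~\ref{ber} (``the generic fiber of $f$ is reduced, we are in characteristic zero''). For $m \in U$, the scheme-theoretic fiber $\pi^{-1}(m)$ is smooth over $\C$, in particular regular, hence reduced. Consequently $J_m$ is radical, and the two ideals coincide.

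The main subtlety, though a minor one, is the schematic identification in the second paragraph: one must verify that the ideal obtained by naive substitution $m \mapsto m$ in a set of generators really defines the scheme-theoretic fiber, and not a proper subscheme thereof. Once this is in place, the conclusion follows directly from generic smoothness, and the genericity locus $U$ can be taken to be the intersection of the open set on which $\pi|_Z$ is smooth with the open set (from the definition of algebraic family) on which the fibers are irreducible of the stated dimension.
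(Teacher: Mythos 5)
Your proof is correct, but it takes a genuinely different route from the one the paper writes out --- in fact it is exactly the alternative that the authors themselves flag in the Remark immediately following the lemma (``this can be also obtained by a computation of a scheme theoretic fibers of $\pi$ and using the fact that such generic fibers are reduced''). The paper's own argument is a hands-on Jacobian computation: after using Sard's theorem to arrange that a generic $m$ is a regular value of $\pi|_Z$, it shows that the matrix $[\partial f_i/\partial x_j(m,x)]$ already has rank $n+q-p=\mathrm{codim}\, Z_m$ at every point of $Z_m$, so that the ideal $(f_1(m,x),\dots,f_s(m,x))$ locally cuts out the smooth variety $Z_m$ with its reduced structure and hence coincides with $I(Z_m)$ locally, therefore globally. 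You replace this with the identification $\C[x_1,\dots,x_n]/J_m\cong C/\mathfrak m_m C$ of the scheme-theoretic fibre via right-exactness of $-\otimes_A A/\mathfrak m_m$ (this is the correct and necessary step: it is what guarantees that naive substitution of $m$ into a chosen generating set computes the full fibre ideal and not a proper subideal), followed by generic smoothness in characteristic zero to conclude that this fibre is reduced, so that $J_m=\sqrt{J_m}=I(Z_m)$. The trade-off is what you would expect: your argument is shorter and more conceptual, and it isolates the one real subtlety cleanly; the paper's argument is more elementary, stays entirely within the language of varieties and Jacobian ranks, and exhibits the good locus of parameters explicitly as the set of regular values of $\pi|_Z$. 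Both proofs are complete.
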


\begin{proof}
Let dim $Z=p$ and dim $M=q.$ Thus the variety $M\times \C^n$ has
dimension $n+q.$ Choose  local holomorphic coordinates on $M.$
Since the variety $Z$ is smooth we have

\vspace{5mm}
\begin{center}

      ${\rm rank} \left[ \begin{array}{ccccccccc}

            \frac{\partial f_1}{\partial m_1}(m,x) & \ldots & \frac{\partial f_1}{\partial m_q}(m,x) & \frac{\partial f_1}{\partial x_1}(m,x) & \ldots & \frac{\partial f_{1}}{\partial x_n}(m,x) \\
            \vdots & & \vdots & \vdots & & \vdots &   \\
             \frac{\partial f_s}{\partial m_1}(m,x) & \ldots &  \frac{\partial f_s}{\partial m_q}(m,x) & \frac{\partial f_s}{\partial x_1}(m,x) & \ldots & \frac{\partial f_{s}}{\partial x_n}(m,x) \\
            \end{array}
   \right]=n+q-p $
\end{center}

\noindent on $Z.$ Let us consider the projection $\pi: Z\ni (m,x)
\mapsto m\in M.$ By Sard's theorem a generic  $m\in M$ is a
regular value of the mapping $\pi.$ For such a regular value $m$
we have that ker $d_{(m,x)}\pi$ is disjoint from $T_{(m,x)} Z$ for
every $x$ such that $(m,x)\in Z.$ In local coordinates on $M$ this
is equivalent to

\vspace{5mm}
\begin{center}

      ${\rm rank} \left[ \begin{array}{ccccccccc}
            1 & \ldots & 0 & 0 & \ldots & 0 \\
            \vdots & & \vdots & \vdots & & \vdots &  \\
            0 & \ldots & 1 & 0 & \ldots & 0 \\

            * & \ldots & * & \frac{\partial f_1}{\partial x_1}(m,x) & \ldots & \frac{\partial f_{1}}{\partial x_n}(m,x) \\
            \vdots & & \vdots & \vdots & & \vdots &   \\
            * & \ldots & * & \frac{\partial f_s}{\partial x_1}(m,x) & \ldots & \frac{\partial f_{s}}{\partial x_n}(m,x) \\
            \end{array}
   \right]=n+2q-p. $
\end{center}

\noindent Consequently for $(m,x)\in Z$ and $m$ a regular value of
$\pi$ we have

\vspace{5mm}
\begin{center}

 ${\rm rank} \left[ \begin{array}{ccccccccc}

             \frac{\partial f_1}{\partial x_1}(m,x) & \ldots & \frac{\partial f_{1}}{\partial x_n}(m,x) \\
           \vdots & & \vdots &   \\
            \frac{\partial f_s}{\partial x_l}(m,x) & \ldots & \frac{\partial f_{s}}{\partial x_l}(m,x) \\
            \end{array}
   \right]=n+q-p. $
\end{center}

\noindent  Note that $n+q-p={\rm codim}\ Z_m$ ( in $\C^n$). This
means that the ideal $(f_1(m,x),..., f_s(m,x))$ locally coincide
with $I(Z_m),$ because it contains local equations of $Z_m.$ Hence
it also coincides globally, i.e., $(f_1(m,x),...,
f_s(m,x))=I(Z_m).$
\end{proof}

\begin{re}
{\rm This can be also obtained by a computation of a scheme
theoretic fibers of $\pi$ and using the fact that such generic
fibers are reduced.}
\end{re}

\begin{ex}{\rm
a) Let $N:={n+d \choose d}$ and let $Z\subset \C^N\times \C^n$  be
given by equations $Z=\{ (a, x)\in \C^N\times \C^n:
\sum_{|\alpha|\le d} a_\alpha x^\alpha=0\}.$ The projection $\pi
:Z\ni (a, x)\to a\in \C^N$ determines an algebraic family of
hypersurfaces of degree $d$ in $\C^n.$ If $n=2$ and $d>1$ this
family is  in  general position in $\C^2.$

b) More generally let $N_1:={n+d_1 \choose d_1}$, $N_2:={n+d_2
\choose d_2}$, $N_n:={n+d_n \choose d_n}$ and let $Z\subset
\C^{N_1}\times \C^{N_2}...\times \C^{N_n}\times \C^{2n}$ be given
by equations $Z=\{ (a_1,a_2,...,a_n, x)\in \C^{N_1}\times
\C^{N_2}...\times \C^{N_n}\times \C^n: \sum_{|\alpha|\le d_1}
{a_1}_\alpha x^\alpha=0, \sum_{|\alpha|\le d_2} {a_2}_\alpha
x^\alpha=0 ,..., \sum_{|\alpha|\le d_n} {a_n}_\alpha
x^\alpha=0\}.$ The projection $\pi :Z\ni (a_1, a_2,...,a_n, x)\to
(a_1, a_2,..., a_n)\in \C^{N_1}\times \C^{N_2}...\times \C^{N_n}$
determines an algebraic family $\Sigma(d_1,d_2,..., d_n, 2n)$ of
complete intersections of multi-degree $d_1, d_2,...., d_n$ in
$\C^{2n}.$ If $ d_1, d_2,..., d_n>1 $, then  this family is in
general position in $\C^{2n}.$ This follows from Proposition
\ref{general}. }
\end{ex}

\section{Main result}
Let us recall that a following result is true ( see e.g.
\cite{jjr}):

\begin{lem}\label{fiber}
Let $X, Y$ be  complex algebraic varieties and $f:X\to Y$ a
polynomial dominant mapping. Then two generic fibers of $f$ are
homeomorphic.
\end{lem}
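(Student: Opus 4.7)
The plan is to invoke the generic topological triviality of a polynomial morphism, due to Thom and Verdier. First, I would reduce to the case where $X$ and $Y$ are irreducible: only finitely many irreducible components of $X$ can dominate $Y$, and genericity is a Zariski-open condition; I may also replace $Y$ by its smooth locus, which is Zariski open and dense.

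Next, I would equip $X$ with a Whitney stratification by smooth locally closed algebraic subvarieties, refining it so that $f$ becomes a \emph{stratified} map: each stratum $S$ of $X$ has constant rank under $f$ and, over a Zariski open dense subset of $Y$, is mapped submersively onto an open piece of a stratum of $Y$. The existence of such a stratification follows from Whitney's theorem combined with generic smoothness (Sard applied stratum by stratum), after shrinking $Y$ so as to avoid the non-properness set $S_f$ from Section~3 as well as the images of the lower-dimensional strata which would produce jumps in fiber topology.

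Then, by Verdier's generic local triviality theorem, or equivalently by Thom's first isotopy lemma applied after a suitable compactification and then restricted to the properness locus, there exists a Zariski open dense subset $U\subset Y$ such that $f:f^{-1}(U)\to U$ is a stratified topologically locally trivial fibration. Since $Y$ is irreducible, $U$ is path-connected in the Euclidean topology; therefore any two fibers of $f$ over points of $U$ are homeomorphic, which proves the lemma.

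The main obstacle is the technical construction of a Whitney stratification compatible with $f$ in the non-proper algebraic setting and the correct invocation of Thom--Verdier triviality; once these standard tools are in place, the conclusion is immediate from path-connectedness of the base of the fibration, and no further case analysis is required.
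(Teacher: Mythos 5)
Your proposal follows essentially the same route as the paper: compactify, take a Whitney stratification compatible with the boundary, discard the closure of the critical values of the restrictions of $f$ to the strata, and apply Thom's first isotopy lemma (equivalently, invoke Verdier's generic local triviality); the paper implements the compactification concretely as $\overline{\mathrm{graph}(f)}\subset X_1\times\overline{Y}$ and then uses that the stratum-preserving trivialization of the resulting proper map restricts to a trivialization of $f$ over $Y\setminus B$. The one imprecision is your phrase ``restricted to the properness locus'': since $f$ need not be generically finite, the set $S_f$ of Section~3 is not the relevant object --- what makes the argument work is that the trivialization preserves the stratum $X=X_2\setminus Z$, which is exactly the paper's final observation.
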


\begin{proof}
Let $X_1$ be an algebraic completion of $X.$ Take
$X_2=\overline{graph(f)}\subset X_1\times \overline{Y},$ where
$\overline{Y}$ is a smooth algebraic completion of $Y.$ We can
assume that $X\subset X_2.$ Let $Z=X_2\setminus X.$ We have an
induced mapping $\overline{f}: X_2\to  \overline{Y},$ such that
$\overline{f}_X=f.$

There is a Whitney stratification $\cal S$  of the pair $(X_2,Z).$
For every smooth strata $S_i\in \cal S$ let $B_i$ be the set of
critical values of the mapping {${f}|_{S_i}.$}Take $B=\overline{
\bigcup B_i}.$ Take $X_3=X_2\setminus f^{-1}(B)$ and
$Z_1=Z\setminus f^{-1}(B).$ The restriction of the stratification
$\cal S$ to $X_3$ gives a Whitney {stratification}  of the pair
$(X_3, Z_1).$ We have a proper mapping $f_1 : X_3\to
\overline{Y}\setminus B$ which is submersion on each strata. By
the Thom first isotopy theorem there is a trivialization of $f_1$,
which preserves the strata. It is an easy observation that this
trivialization gives a trivialization of the mapping $f:
X\setminus f^{-1}(B)\to Y\setminus B.$
\end{proof}

\begin{defi}
{\rm Let $X$ be an affine variety. Let us define
$Sing^k(X):=Sing(X)$ for $k:=1$ and inductively
$Sing^{k+1}(X):=Sing(Sing^k(X)).$}
\end{defi}

As a direct application of the Lemma \ref{fiber} and Theorem
\ref{ber} we have:

\begin{theo}\label{w}
Let $f: X^n\to Y^l$ be a dominant polynomial mapping of affine
varieties.  If $y_1, y_2$ are sufficiently general then
$f^{-1}(y_1)$ is homeomorphic to $f^{-1}(y_2)$ and
$Sing(f^{-1}(y_1))$ is homeomorphic to $Sing(f^{-1}(y_2)).$ More
generally, for every $k$ we have $Sing^k(f^{-1}(y_1))$ is
homeomorphic to $Sing^k(f^{-1}(y_2)).$
\end{theo}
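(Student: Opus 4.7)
The plan is to combine Lemma \ref{fiber} with Theorem \ref{ber}, but carefully upgrading the stratification in the former so that the trivialization it produces is compatible with the iterated singular loci of $X$.

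First I would prove, by induction on $k$, the identity
\[
\mathrm{Sing}^k(f^{-1}(y))=f^{-1}(y)\cap \mathrm{Sing}^k(X)
\]
for generic $y\in Y$. The base case $k=1$ is exactly Theorem \ref{ber}. For the inductive step, assume it holds for $k-1$, and decompose $\mathrm{Sing}^{k-1}(X)=V_1\cup\cdots\cup V_r$ into irreducible components. Those $V_i$ with $\overline{f(V_i)}\subsetneq Y$ miss a generic fiber, so for generic $y$ one has $f^{-1}(y)\cap \mathrm{Sing}^{k-1}(X)=\bigcup_{i\in D}(f^{-1}(y)\cap V_i)$, where $D$ indexes the dominant components. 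For each $i\in D$, Theorem \ref{ber} applied to the dominant map $f|_{V_i}\colon V_i\to Y$ gives $\mathrm{Sing}(f^{-1}(y)\cap V_i)=f^{-1}(y)\cap V_i\cap \mathrm{Sing}(V_i)$ for generic $y$. Since the singular locus of a union of subvarieties is the union of the singular loci of the components together with the pairwise intersections, I compute
\[
\mathrm{Sing}\bigl(\textstyle\bigcup_{i\in D}(f^{-1}(y)\cap V_i)\bigr)=f^{-1}(y)\cap\Bigl(\bigcup_{i}\mathrm{Sing}(V_i)\cup\bigcup_{i\neq j}(V_i\cap V_j)\Bigr)=f^{-1}(y)\cap\mathrm{Sing}(\mathrm{Sing}^{k-1}(X)),
\]
which is $f^{-1}(y)\cap\mathrm{Sing}^k(X)$. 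Combined with the inductive hypothesis, this closes the induction.

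Next I would rerun the proof of Lemma \ref{fiber}, but choosing the Whitney stratification $\mathcal{S}$ of the completion $(X_2,Z)$ so that $\overline{\mathrm{Sing}^k(X)}\subset X_2$ is a union of strata for every $k$ (finitely many nontrivial $k$, and such refinements always exist). Thom's first isotopy theorem then yields a stratified trivialization of $f$ over the complement of a proper algebraic subset $B\subset Y$, so for any two $y_1,y_2\in Y\setminus B$ the resulting homeomorphism $f^{-1}(y_1)\to f^{-1}(y_2)$ restricts to homeomorphisms of each intersection $f^{-1}(y)\cap\mathrm{Sing}^k(X)$. Invoking Step~1 identifies these intersections with the iterated singular loci of the fibers, yielding the desired homeomorphisms $\mathrm{Sing}^k(f^{-1}(y_1))\cong \mathrm{Sing}^k(f^{-1}(y_2))$.

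The main obstacle I anticipate is the bookkeeping in the inductive step: one must correctly identify the singular locus of a reducible variety (including the pairwise intersections of components) and ensure that the non-dominant components of $\mathrm{Sing}^{k-1}(X)$ are harmlessly absorbed by taking $y$ generic enough. Everything else is a routine combination of Theorem \ref{ber}, the standard refinement of Whitney stratifications to be compatible with a finite family of closed subvarieties, and the stratified triviality supplied by the Thom isotopy theorem as already used in Lemma \ref{fiber}.
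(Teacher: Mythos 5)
Your proposal is correct and follows essentially the route the paper intends: the paper states Theorem \ref{w} as a direct application of Lemma \ref{fiber} and Theorem \ref{ber}, and your two steps (iterating Theorem \ref{ber} componentwise to get $Sing^k(f^{-1}(y))=f^{-1}(y)\cap Sing^k(X)$ for generic $y$, then using the stratified trivialization from the proof of Lemma \ref{fiber}, refined to be compatible with the closed sets $Sing^k(X)$) are exactly the details that "direct application" elides. The only point worth noting is that your formula for the singular locus of a union is applied to the possibly reducible pieces $f^{-1}(y)\cap V_i$ rather than to irreducible components, which is harmless here since for generic $y$ these pieces share no irreducible component.
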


Now we are ready to prove:

\begin{theo}
Let $\Sigma$ be an algebraic family of $n-$dimensional algebraic
subvarieties in $\C^{2n}$ in  general position.  Symmetry defect
hypersurfaces $B_1, B_2$ for generic members $C_1, C_2\in \Sigma$
are homeomorphic and they have homeomorphic singular parts i.e.,
$Sing(B_1)\cong Sing(B_2).$ More generally, for every $k$ we have
$Sing^k(B_1)$ is homeomorphic to $Sing^k(B_2).$
\end{theo}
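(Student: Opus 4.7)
The plan is to construct a total symmetry defect variety $\mathcal{B}\subset M\times \C^{2n}$ as the bifurcation set of a relative midpoint map, verify that its generic fibre over $M$ is the symmetry defect hypersurface of the corresponding member of $\Sigma$, and then apply Theorem \ref{w} to the projection $p_M:\mathcal{B}\to M$.

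First I would realise $\Sigma$ as a smooth subvariety $Z\subset M\times \C^{2n}$ with projection $\pi:Z\to M$ having fibre $Z_m$, form the fibered product
$$\widetilde{Z}:=Z\times_M Z=\{(m,x,y)\in M\times \C^{2n}\times \C^{2n}:(m,x),(m,y)\in Z\},$$
which is irreducible because the generic fibre $Z_m\times Z_m$ is irreducible over the irreducible base $M$, and restrict to the open dense $M_0\subset M$ where $\pi$ is smooth, so that $\widetilde{Z}_0:=\pi^{-1}(M_0)\times_{M_0}\pi^{-1}(M_0)$ is smooth. On $\widetilde{Z}_0$ define the relative midpoint map
$$\Psi:\widetilde{Z}_0\longrightarrow M_0\times\C^{2n},\qquad (m,x,y)\longmapsto \bigl(m,\tfrac{x+y}{2}\bigr).$$
Since a generic member of $\Sigma$ is in general position, $\Psi$ is a dominant, generically finite polynomial mapping between smooth irreducible affine varieties of the same dimension $\dim M+2n$.

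Next I would apply the machinery of Section 3 to $\Psi$ and set
$$\mathcal{B}:=B(\Psi)=K_0(\Psi)\cup S_\Psi,$$
which by Theorem \ref{jk} is a closed hypersurface in $M_0\times \C^{2n}$. The crucial step is the fibrewise identification: for generic $m\in M_0$,
$$\mathcal{B}\cap(\{m\}\times \C^{2n})=\{m\}\times B_m,$$
where $B_m$ denotes the symmetry defect hypersurface of $Z_m$. For the critical-values part $K_0(\Psi)$ this follows from the block form of $d\Psi$ at a point of a generic smooth fibre of $\pi\times\pi$: the derivative of $\Psi$ along $Z_m\times Z_m$ coincides with $d\Phi_{Z_m}$ where $\Phi_{Z_m}(x,y)=(x+y)/2$, so critical values of $\Psi$ over a generic $m$ agree with critical values of $\Phi_{Z_m}$. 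For $S_\Psi$ I would invoke the standard compatibility of the non-properness locus with generic base change, which follows from the hypersurface description of $S_f$ in \cite{jel},\cite{jel1} applied to the family.

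With the fibrewise identification in hand, $p_M:\mathcal{B}\to M_0$ is a dominant polynomial mapping of affine varieties whose generic fibre is homeomorphic to $B_m$. Applying Theorem \ref{w} to $p_M$ yields at once that $B_{m_1}\cong B_{m_2}$ and $Sing^k(B_{m_1})\cong Sing^k(B_{m_2})$ for every $k$ when $m_1,m_2\in M_0$ are sufficiently generic, which is exactly the content of the theorem. The main obstacle is the fibrewise identification of $\mathcal{B}$, in particular the compatibility of $S_\Psi$ with slicing: the critical-values part reduces to linear algebra on the smooth family, but the non-properness part requires care with points at infinity and is where the algebraic (not merely analytic) nature of $\Sigma$ is used.
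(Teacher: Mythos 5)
Your proposal follows essentially the same route as the paper: form the fibered product $Z\times_M Z$, apply Theorem \ref{jk} to the relative midpoint map $\Psi(m,x,y)=(m,\tfrac{x+y}{2})$, identify the slice of $B(\Psi)$ over a generic $m$ with the symmetry defect hypersurface of $Z_m$, and conclude by Theorem \ref{w} applied to the projection $B(\Psi)\to M$. The only cosmetic differences are that the paper verifies smoothness and irreducibility of the fibered product explicitly via generators of $I(Z)$ (Lemma \ref{ideal}), and obtains the fibrewise identification directly from the constancy of the fiber count of $\Psi$ off $B(\Psi)$ rather than by treating $K_0(\Psi)$ and $S_\Psi$ separately.
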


\begin{proof}
Let $\Sigma$ be given by a variety $Z\subset M\times \C^{2n}.$ The
ideal $I(Z)\subset \C[M][x_1,....,x_{2n}]$ is finitely generated.
Choose a finite set of generators $\{f_1(m,x),..., f_s(m,x)\}.$

By Sard Theorem we can assume that all fibers of $\pi: Z\to M$ are
smooth and for every $m\in M$ we have $I(Z_m)=\{f_1(m,x),...,
f_s(m,x)\}$ (see Lemma \ref{ideal}). Let us define
$$
R=\{ (m,x,y) \in M\times \C^{2n}\times \C^{2n}: f_i (m)(x)=0,
i=1,..., s \quad \& \quad f_i (m)(y)=0, i=1,...,s\}.
$$

The variety $R$ is a smooth irreducible subvariety of $ M\times
\C^{2n}\times \C^{2n}$ of codimension $2n$. Indeed, for given
$(m,x,y)\in M\times \C^{2n}\times \C^{2n}$ choose polynomials
$f_{i_1},...,f_{i_n}$ and $f_{j_1},...,f_{j_n}$ such that rank
$[\frac{\partial f_{i_l}}{\partial x_s}(m,x)]_{l=1,...,n;
s=1,...,n}=n$ and rank $[\frac{\partial f_{j_l}}{\partial
x_s}(m,x)]_{l=1,...,n; s=1,...,n}=n.$ Since $Z$ is a smooth
variety of dimension $\dim M + n$, we have that $Z$ locally near
$(m,x)$ is given by equations $f_{i_1},...,f_{i_n}$ and near
$(m,y)$ is given by equations $f_{j_1},...,f_{j_n}.$ Hence the
variety $R$ near the point $(m,x,y)$ is given as $$\{ (m,x,y) \in
M\times \C^{2n}\times \C^{2n}: f_{i_l} (m)(x)=0, l=1,..., n \quad
\& \quad f_{j_l} (m)(y)=0, l=1,...,s\}.$$ In particular $R$ is
locally a smooth complete intersection, i.e., $R$ is smooth.

Moreover we have a projection $R\to M$ with irreducible fibers
 which are products $Z_m\times Z_m, \quad m\in M$. This
means that $R$ is irreducible. Note that $R$ is an affine variety.
Consider the following  morphism
$$
\Psi :R\ni (m,x,y)\mapsto (m, {x+y\over 2})\in M\times \C ^{2n}.
$$  By the assumptions the mapping $\Psi$ is dominant. Indeed for every
$m \in M$  the fiber $Z_m$ is in a general position in $\C^{2n}$
and consequently the set $\Psi(R) \cap m\times \mathbb C^{2n}$ is
dense in $m\times \mathbb C^{2n}.$

We know by Theorem \ref{jk} that the mapping $\Psi$ has constant
number of points in the fiber outside the bifurcation set $B(\Psi
)\subset M\times \C ^{2n}.$ This implies that $B(Z_m)=m\times
\C^{2n}\cap B(\Psi ).$ In particular the symmetry defect
hypersurface of the variety $Z_m$ coincide with the fiber over $m$
of the projection $\pi :B(\Psi )\ni (m,x) \mapsto m\in M.$ Now we
conclude the proof by Theorem \ref{w}.
\end{proof}

\begin{co}
Symmetry defect sets $B_1, B_2$  for generic curves $C_1,
C_2\subset \C^2$ of the same degree $d {>1}$ are homeomorphic and
they have the same number of singular points.
\end{co}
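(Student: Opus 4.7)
The plan is to apply the preceding theorem with $n=1$ to the algebraic family $\Sigma$ of plane curves of degree $d$ in $\C^2$ described in Example 5.3(a), namely the family given by $Z=\{(a,x)\in\C^N\times\C^2 : \sum_{|\alpha|\le d}a_\alpha x^\alpha=0\}$ (with $N=\binom{2+d}{d}$) together with the projection $\pi:Z\to\C^N$. The hypothesis $d>1$ is precisely what Example 5.3(a) demands in order to conclude that $\Sigma$ is in general position in $\C^2$, so the full hypothesis of the preceding theorem is met.

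With this setup, the preceding theorem applies verbatim and yields, for two sufficiently generic members $C_1,C_2\in\Sigma$, a homeomorphism $B_1\cong B_2$ of their symmetry defect hypersurfaces together with a homeomorphism $Sing(B_1)\cong Sing(B_2)$ of their singular loci. The first half of the corollary is thereby immediate.

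For the second half, only a dimension-count observation is needed. Each $B_i\subset\C^2$ is an algebraic hypersurface, hence a reduced algebraic curve of (complex) dimension one. The singular locus of a reduced one-dimensional affine variety is a proper Zariski-closed subset, hence of dimension zero, hence a finite set of points. Any homeomorphism between two finite sets (equipped with their inherited, necessarily discrete, topology) is nothing other than a bijection, so $\#Sing(B_1)=\#Sing(B_2)$, which is the required equality of numbers of singular points.

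There is essentially no genuine obstacle: the corollary is a clean specialization of the preceding theorem combined with the triviality that an algebraic curve in $\C^2$ has only finitely many singular points. The only minor point worth noting is that one implicitly relies on $B_i$ being reduced of pure dimension one, but this has already been built into the preceding proof, where the symmetry defect set is identified with the bifurcation set of a generically finite dominant morphism and is given hypersurface structure via Theorem \ref{jk}.
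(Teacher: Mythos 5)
Your proof is correct and follows the route the paper intends: the corollary is a direct specialization of the preceding theorem to the family of degree-$d$ plane curves from Example 5.3(a) (in general position precisely because $d>1$), combined with the observation that the singular locus of the defect curve $B_i\subset\C^2$ is finite, so the homeomorphism $Sing(B_1)\cong Sing(B_2)$ amounts to an equality of cardinalities. The paper gives no separate proof of this corollary, and your argument supplies exactly the missing (routine) details.
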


\begin{co}
Let $C_1,C_2$ be two  smooth varieties, which are generic complete
intersection of multi-degree $d_1,d_2,..., d_n$ in $\C^{2n}$
(where all $d_i>1$). Then symmetry defect hypersurfaces $B_1, B_2$
of $C_1, C_2$, are homeomorphic and they have homeomorphic
singular parts (i.e., $Sing(B_1)\cong Sing(B_2)$). More generally,
for every $k$ we have $Sing^k(B_1)$ is homeomorphic to
$Sing^k(B_2).$
\end{co}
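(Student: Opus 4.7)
The plan is to realize this corollary as a direct specialization of the preceding main theorem to the concrete algebraic family of complete intersections constructed in Example~5.b. That example exhibits an explicit smooth variety $Z \subset M \times \C^{2n}$ with $M = \C^{N_1} \times \cdots \times \C^{N_n}$ parametrizing all tuples of polynomials of multi-degrees $d_1,\dots,d_n$, so that the fiber over a generic $m \in M$ is precisely a smooth complete intersection of the required multi-degree. Thus a ``generic complete intersection'' $C_i$ of multi-degree $d_1,\dots,d_n$ in $\C^{2n}$ is, by definition, the fiber $Z_{m_i}$ for a generic parameter $m_i \in M$.

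The key verification needed to apply the main theorem is that the family $\Sigma := \Sigma(d_1,\dots,d_n,2n)$ is an algebraic family of $n$-dimensional subvarieties of $\C^{2n}$ in general position. Smoothness, irreducibility, and the dominance/irreducibility of $\pi : Z \to M$ are automatic from the construction in Example~5.b. The general position condition is exactly what Proposition~\ref{general} provides: since every $d_i > 1$, a generic member is in super general position, and by the proposition and theorem of Section~4 this implies that a generic member is in general position in $\C^{2n}$.

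With these two verifications in hand, I would apply the main Theorem of Section~6 directly to the family $\Sigma$ and to the two generic members $C_1 = Z_{m_1}$ and $C_2 = Z_{m_2}$. It yields that the symmetry defect hypersurfaces $B_1, B_2$ are homeomorphic and that $Sing^k(B_1) \cong Sing^k(B_2)$ for every $k$, which is precisely the statement of the corollary.

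The only subtle point is to check that the ``generic'' in the hypothesis of the corollary (generic smooth complete intersection) and the ``generic'' in the main theorem (generic member of the family $\Sigma$) are compatible. Both are Zariski-open nonempty conditions on the parameter space $M$, so their intersection is still Zariski-open and dense; I expect this compatibility to be the only delicate bookkeeping step, and it is routine rather than a genuine obstacle.
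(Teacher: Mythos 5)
Your proposal is correct and matches the paper's intended derivation: the paper states this corollary without proof, as an immediate application of the main theorem of Section 6 to the family $\Sigma(d_1,\dots,d_n,2n)$ of Example 5.1(b), whose general position is guaranteed by Proposition \ref{general}. Your additional remarks on smoothness/irreducibility of $Z$ and on the compatibility of the two notions of ``generic'' are exactly the routine verifications the paper leaves implicit.
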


\end{document}